\documentclass[12pt]{article}
\usepackage{hyperref,color}
\usepackage[margin=.5in]{geometry}
\usepackage{titlesec}
\usepackage{longtable}
\usepackage{amsthm}
\usepackage{amsmath}
\usepackage{amsfonts}
\usepackage{amssymb}
\usepackage{mathrsfs}
\usepackage{authblk}
\usepackage{cite}
\newtheorem{theorem}{Theorem}
\newtheorem{definition}[theorem]{Definition}

\newtheorem{proposition}[theorem]{Proposition}

\usepackage{lipsum}
\usepackage[T1]{fontenc}
\usepackage{wasysym}


\setlength{\parindent}{0pt}
\setlength{\parskip}{7pt}

\titlespacing\subsection{0in}{\parskip}{\parskip}
\titlespacing\section{0in}{\parskip}{\parskip}

\usepackage{enumitem}
\usepackage{lmodern}

\begin{document}
\title{Sasakian structure associated with a second order ODE and Hamiltonian dynamical systems}

\author[1]{T. Bayrakdar\\ \small{\emph{Faculty of Sciences, Department of Mathematics, Trakya University, Edirne, Turkey}\\ \emph{Email: tunabayraktar@gmail.com} }}

\maketitle

\begin{abstract}
We define a contact metric structure on the manifold corresponding to a second order ordinary differential equation $d^2y/dx^2=f(x,y,y')$ and show that the contact metric structure is Sasakian if and only if the 1-form $\frac{1}{2}(dp-fdx)$ defines a Poisson structure. We consider a Hamiltonian dynamical system defined by this Poisson structure and  show that the Hamiltonian vector field, which is a multiple of the Reeb vector field, admits  a compatible bi-Hamiltonian structure for which $f$ can be regarded as a Hamiltonian function. As a particular case, we give a compatible bi-Hamiltonian structure of the Reeb vector field such that the structure equations correspond to the Maurer-Cartan equations of an invariant coframe on the Heisenberg group and the independent variable plays the role of a Hamiltonian function. We also show that the first Chern class of the normal bundle of an integral curve of a multiple of the Reeb vector field vanishes iff $f_x+ff_p = \Psi (x)$ for some $\Psi$.
\end{abstract}

\textbf{Keywords} Sasakian structure, Poisson structure, Hamiltonian systems, ordinary differential equations

\textbf{Mathematics Subject Classification (2010)} 53C25, 53D10, 53D17, 34A26,
58A15 

\section{Introduction}
An autonomous dynamical system $\dot{x}=v\left(x(t)\right)$ on a smooth manifold $\Sigma$ endowed with a Poisson structure is said to be Hamiltonian if it can be written in the form
\begin{equation}
\label{e1}
v=\Omega (dH,\cdot),
\end{equation} 
where $H$ is the Hamiltonian function and $\Omega$ is the Poisson bi-vector. In three dimensions, the differential of Hamiltonian function and Poisson 1-form corresponding to the Poisson structure define codimension one foliations contrary to the Hamiltonian vector field which is not necessarily holonomic and it may define a non-integrable contact distribution.  In this respect it will be convenient to take into account an additional geometric structure, such as a contact metric structure or in particular a Sasakian structure for the investigation of a Hamiltonian dynamical system identified with a non-holonomic vector field.

The notion of  a normal contact metric structure or a Sasakian structure on an odd dimensional smooth manifold was introduced by Sasaki and Hatakeyama in \cite{Sasaki1962} after the papers \cite{Sasaki1960,Sasaki1961} by exhibiting  certain tensor analogues to the Nijenhuis tensor on an almost complex manifold such that  the contact metric structure is said to be normal provided that this tensor vanishes. Several  works in literature have been devoted to the study of contact metric manifolds  with regard to  their geometric properties, see for example \cite{Hatakeyama1963,Olszak1979,Chern1984,Blair1977,Blair1990,Blair1995,Perrone1990,Perrone1995,Koufogiorgos1993,Geiges1997,Sharma1995}, as well as to their applications in physics. Contact  manifolds (with or without a metric) appear in physics as a contact phase space and provide a geometric description for thermodynamics \cite{Mrugala1991,Mrugala2000,Hernandez1998,Bravetti2015,Hudon2015,Eberard2007}  and for contact Hamiltonian dynamics as an extension of symplectic dynamics \cite{Bravetti2017,Cruz2018}. In particular, three dimensions deserve a special interest on account of the study of  contact metric structures and a geometric formulation of Hamiltonian dynamical systems. In three dimensions a Sasakian structure is recast as a $K$-contact metric structure such that the Reeb vector field of the contact 1-form is Killing \cite{Blair2002}. Concerning a Hamiltonian dynamical system,  a Hamiltonian dynamical system has an elegant representation provided with the musical isomorphisms between covariant and contravariant objects in three dimensions \cite{Gumral1993,Abadoglu2009}.

In this paper we define a contact metric structure on the three-dimensional smooth manifold corresponding to a second order ordinary differential equation (ODE) $d^2y/dx^2=f(x,y,y')$ such that the Reeb vector field is Killing if and only if $f$ does not depend on the variable $y$. Then we see that this condition is equivalent to the existence of the Poisson structure determined by the 1-form $\eta^3=\frac{1}{2}(dp-fdx)$. For this Poisson structure we consider a Hamiltonian dynamical system and show that an integral curve of the Hamiltonian vector field, which is a multiple of the Reeb vector field, corresponds to a geodesic curve.  We also show that this Hamiltonian vector field admits a compatible bi-Hamiltonian structure such that $f$ can be taken as a Hamiltonian function. As a particular case we consider the bi-Hamiltonian structure of  the Reeb vector field and we see that for a second order  ODE of the form $d^2 y/ dx^2 = f(x)$ it is possible to define a compatible bi-Hamiltonian structure in a way that the independent variable plays the role of a Hamiltonian function. This corresponds to the distinguished case when the structure equations for the coframe encoding the second order ODE are the Maurer-Cartan equations for the invariant coframe on the Heisenberg group and the motion is governed by the left-invariant vector field. As a final remark we evaluate the curvature of the connection on the vector bundle whose fibers are annihilated by the contact form, and show that the curvature of this connection and hence the first Chern class of the normal bundle of an integral curve of a multiple of the Reeb vector field vanishes iff $f_x+ff_p = \Psi (x)$ for some function $\Psi$.

\section{Preliminaries}

A \textit{contact manifold} $(M,\eta)$ is a smooth odd-dimensional manifold $M$ endowed with a 1-form $\eta$, so called contact 1-form, satisfying $\eta\wedge (d\eta)^n\neq 0$  globally on  $M$. On a contact manifold $(M,\eta)$, there is a unique vector field $\xi$, called the \textit{Reeb vector field} or \textit{characteristic vector field}   of $\eta$, satisfying $\eta(\xi)=1$ and $d\eta(X,\xi)=0$ for any $X\in \Gamma(TM)$. Here $\Gamma(TM)$ denotes the set of all smooth sections of the tangent bundle $TM$ which is a module over the ring of smooth functions on $M$.  The choice of a contact form is an additional structure on a contact manifold. Scaling
the contact form gives the same contact structure (but a different Reeb vector field). A Riemannian metric $g$ on $(M,\eta)$ is said to be an \textit{associated metric} if there exists an almost contact metric structure  such that  $g\left(X,\phi(Y)\right)=d\eta (X,Y)$ for any $X,Y\in \Gamma(TM)$ \cite{Blair2002}. In other words, a Riemannian metric $g$ on the contact manifold $(M,\eta)$ with the Reeb vector field $\xi$ is said to be an associated metric if 
there exists a tensor field $\phi$ of type (1,1) such that
\begin{equation}\label{CMS}
\phi^2= -\textrm{id}+\eta\otimes\xi, 
\end{equation}
and
\begin{equation}\label{CMS2}
g(\phi(X),\phi(Y))=g(X,Y)-\eta(X)\eta(Y), 
\end{equation}
and
\begin{equation}\label{AdaptedMetric}
d\eta(X,Y)=g\left(X,\phi(Y)\right)
\end{equation}
for any $X,Y\in \Gamma(TM)$.  $(\phi,\xi,\eta, g)$  is called a \textit{contact metric structure}. Together with a contact metric structure,  $(M,\eta)$ is called a \textit{contact metric manifold}  and is denoted by  $(M,\phi,\xi,\eta,g)$. Without the condition (\ref{AdaptedMetric}), $(\phi,\xi,\eta, g)$ defines an almost contact metric structure on $(M,\eta)$.  Note that on an (almost) contact metric manifold we have $\phi(\xi)=0$ and  $\eta\circ \phi =0$. Also setting $\xi=Y$ in (\ref{CMS2}) gives $\eta(X)=g(X,\xi)$.    
On a contact metric manifold the tensor field $h$ of type (1,1) defined by $h=\frac{1}{2}\mathcal{L}_\xi \phi$, where $\mathcal{L}_\xi$ is the Lie derivative with respect to $\xi$, is a symmetric operator, i.e. $g(h(X),Y)=g(X,h(Y))$ for all $X,Y\in \Gamma(TM)$ and satisfies the following properties:
\begin{equation}
h(\xi)=0,~~h\circ\phi=-\phi\circ h,~~\textrm{tr}h=0.
\end{equation}
If $\nabla$ is the Riemannian connection on a contact metric manifold $(M,\phi,\xi,\eta, g)$ compatible with $g$, then we have
\begin{equation}
\nabla_X \xi = -\phi(X)-\phi\left(h(X)\right). 
\end{equation}
A contact metric structure $(\phi,\xi,\eta, g)$ is called \textit{$K$-contact} if $\xi$ is a Killing vector field, that is $\mathcal{L}_\xi g=0$. Note that $\xi$ is a Killing vector field iff the tensor $2h=\mathcal{L}_\xi \phi$  vanishes, see e.g. \cite{Sasaki1962,Blair2002}.
A contact metric structure $(\phi,\xi,\eta, g)$ is called \textit{Sasakian} iff 
\begin{equation}
(\nabla_X \phi)Y = -g(X,Y)\xi-\eta(Y)X. 
\end{equation}
A Sasakian manifold is $K$-contact and the converse is also true in three dimensions. For a detailed account of the contact metric structures we refer to \cite{Blair1976,Blair2002}.

\section{Sasakian structure associated with a second order ODE}

A second order ODE of the form
\begin{equation}\label{second ode}
\frac{d^2y}{dx^2}= f(x,y,y')
\end{equation}
can be regarded as a closed submanifold $\Sigma$ of the second order jet bundle $J^2\pi$ defined to be the zero set of the function $F(x,y,p,q)=q-f(x,y,p)$, that is, $\Sigma= F^{-1}(\{0\})$, where $(x,y,p,q)$ denotes the standard coordinates on $J^2\pi$. The natural inclusion $i:\Sigma\hookrightarrow J^2\pi$ defines an embedding  which is locally given by 
\begin{equation}\label{inc1}
i:(x,y,p) \mapsto\left(x,y,p, q=f(x,y,p)\right), 
\end{equation}
where $(x,y,p)$ is considered as a local coordinate system on $\Sigma$.
The second order jet bundle $J^2\pi$ of maps $\mathbb{R}\rightarrow \mathbb{R}$ is a smooth (fibered) manifold  of all 2-jets of smooth sections of the trivial bundle $\pi : \mathbb{R}\times \mathbb{R}\rightarrow \mathbb{R}$. A section of $\pi$ is a function $\gamma:\mathbb{R}\rightarrow\mathbb{R}\times \mathbb{R}$ such that $ \pi\circ\gamma$ is the identity map on the first factor of $\mathbb{R}\times \mathbb{R}$. If $x$ and $y$ stand for coordinates for the respective factors of $\mathbb{R}\times \mathbb{R}$, then $\gamma$ is given in coordinates by $\gamma(x)=(x,y(x))$ and is identified with the function $y(x)$. The 2-jet of $\gamma$ at a point $x=x_0$ is an equivalence class of sections of $\pi$ having the same Taylor coefficients with $y(x)$ up to order 2 at $x=x_0$ and is denoted by $j_{x_0}^2 \gamma$. The standard coordinate functions $x,y,p,q$ on $J^2\pi$ are defined to be $x(j_{x_0}^2 \gamma)=x_0$, $y(j_{x_0}^2 \gamma)=y(x_0)$, $p(j_{x_0}^2 \gamma)=y'(x_0)$, $q(j_{x_0}^2 \gamma)=y''(x_0)$ and hence the point $j_{x_0}^2 \gamma$ is given by $j_{x_0}^2 \gamma=(x_0,y(x_0),y'(x_0),y''(x_0))$.
The image of a smooth curve $\gamma:I\subset\mathbb{R}\rightarrow \mathbb{R}\times\mathbb{R}; x\mapsto\left(x,y(x)\right)$, which is a local section of $\pi$,  defines a solution  of (\ref{second ode}) if the image $S$ of the map $j^2 \gamma: I\rightarrow J^2\pi; {x}\mapsto j_{x}^2\gamma$ 
is a curve on $\Sigma$ such that  the contact forms
\begin{equation}\label{exterior diff system}
\omega^2=dy-pdx,~~\omega^3=dp-fdx
\end{equation}
on $J^2\pi$ vanish on $S$ when they are pulled-back. Here  the map $j^2 \gamma$ is called the 2-graph of the local section $\gamma$. Accordingly, the solutions of the exterior differential system generated by the 1-forms $\omega^2,\omega^3$ with independence condition $\omega^1=dx\neq 0$
are in one-to-one correspondence with the solutions of (\ref{second ode}). For the details of geometric formulation of differential equations and the exterior differential systems see e.g.  \cite{Vassiliou2000,Saunders1989,Bryant1995}.

Let $U\subset\Sigma$ be a coordinate neighborhood with coordinates $(x,y,p)$ and consider the Riemannian metric $g$ on $U$ defined by
\begin{equation}\label{metric}
g= \sum_i \eta^i\otimes \eta^i,
\end{equation}
where $\eta^i=\frac{1}{2}\omega^i$ for $i=1,2,3$. Note that the local coframe $\eta_g=(\eta^1,\eta^2,\eta^3)^t$ is the dual to the orthonormal frame $(\xi_1,\xi_2,\xi_3)$ of the vector fields 
\begin{equation}\label{vectorfields}
\xi_1=  2\left(\frac{\partial}{\partial x}+p\frac{\partial}{\partial y}+f \frac{\partial}{\partial p}\right),~~\xi_2=2\frac{\partial}{\partial y},~~\xi_3=2\frac{\partial}{\partial p}.
\end{equation}
Here $^t$ stands for the transposition. The structure equations for the coframe $\eta_g$ are then given by
\begin{eqnarray}\label{strcture}
d\eta^1&=&0 \nonumber \\
d\eta^2 &=& 2\eta^1\wedge\eta^3 \\
d\eta^3 &=& 2f_y \eta^1\wedge\eta^2+2f_p \eta^1\wedge\eta^3. \nonumber
\end{eqnarray}
Let $\nabla$ be the metric connection in the tangent bundle $T\Sigma$ compatible with the Riemannian metric (\ref{metric}). We shall find the matrix $\theta$ of 1-forms $\alpha,\beta,\delta$ such that $d\eta_g= -\theta\wedge \eta_g$, i.e.
\begin{equation}
d\left[\begin{array}{c}
\eta^1\\
\eta^2\\
\eta^3
\end{array}\right]=-\left[\begin{array}{rrr}
0 & -\alpha & -\beta \\
\alpha & 0 & -\delta\\
\beta & \delta & 0
\end{array}\right]\wedge \left[\begin{array}{c}
\eta^1\\
\eta^2\\
\eta^3
\end{array}\right].
\end{equation}
The matrix  of 1-forms $\theta$ is called the  \textit{connection form} of $\nabla$ on $U$ and is regarded as a 1-form on $U$ which takes its values in $\mathfrak{so}(3,\mathbb{R})$, the Lie algebra of the orthogonal group. By the same idea exposed in \cite{Bayrakdar2018,OkBayrakdar2019} we have the following: 
\begin{proposition}
	Let $(\Sigma,g)$ be the Riemannian manifold	corresponding to a second order ODE $y''= f(x,y,y')$ and let $\nabla$ be the metric connection compatible with $g$. The  connection form of $\nabla$ on $U$ is determined by
	\begin{equation}
	\theta= \left[\begin{array}{rrr}
	0 & -\alpha &-\beta \\
	\alpha & 0 & -\delta \\
	\beta & \delta & 0 
	\end{array}\right],
	\end{equation}
	where $\alpha, \beta$ and $\delta$ are defined respectively by $\alpha  =  (f_y +1)\eta^3$, $\beta=(f_y +1)\eta^2+2f_p\eta^3$ and $\delta =  -(f_y -1)\eta^1.$
\end{proposition}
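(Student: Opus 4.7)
The plan is to invoke the fundamental lemma of Riemannian geometry in the form of Cartan's first structure equations: on a Riemannian manifold with an orthonormal coframe, the Levi-Civita connection form $\theta$ is the unique matrix of 1-forms satisfying (i) antisymmetry $\theta^{i}{}_{j}=-\theta^{j}{}_{i}$ (metric compatibility in the orthonormal frame) and (ii) the torsion-free condition $d\eta^{i}+\theta^{i}{}_{j}\wedge\eta^{j}=0$. Since $\eta_{g}$ is orthonormal for $g$, antisymmetry forces $\theta$ to have the stated shape, with only three independent entries $\alpha,\beta,\delta$. Uniqueness then reduces the proof to exhibiting a triple $(\alpha,\beta,\delta)$ satisfying the three torsion equations, since any such triple must be \emph{the} connection form.

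Concretely, I would expand each of $\alpha,\beta,\delta$ as an arbitrary linear combination of $\eta^{1},\eta^{2},\eta^{3}$ with function coefficients and substitute into the three equations
\begin{equation*}
\alpha\wedge\eta^{2}+\beta\wedge\eta^{3}=0,\qquad -\alpha\wedge\eta^{1}+\delta\wedge\eta^{3}=2\eta^{1}\wedge\eta^{3},\qquad -\beta\wedge\eta^{1}-\delta\wedge\eta^{2}=2f_{y}\,\eta^{1}\wedge\eta^{2}+2f_{p}\,\eta^{1}\wedge\eta^{3},
\end{equation*}
obtained by inserting the structure equations (3.7) and the claimed matrix into the identity $d\eta^{i}=-\theta^{i}{}_{j}\wedge\eta^{j}$. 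Comparing coefficients of the basis 2-forms $\eta^{i}\wedge\eta^{j}$ produces a linear system in the nine unknown coefficients; it is straightforwardly determined and yields $\alpha=(f_{y}+1)\eta^{3}$, $\beta=(f_{y}+1)\eta^{2}+2f_{p}\eta^{3}$, $\delta=-(f_{y}-1)\eta^{1}$.

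Alternatively, and this is how I would actually present it for brevity, one may simply \emph{verify} the proposed $\alpha,\beta,\delta$: substitute them into the right-hand sides of the three equations above and check termwise that the wedge products collapse (using $\eta^{3}\wedge\eta^{2}=-\eta^{2}\wedge\eta^{3}$ etc.) to reproduce precisely the structure equations (3.7). Uniqueness of the Levi-Civita connection then upgrades this verification to a proof that $\theta$ is indeed the connection form of $\nabla$ on $U$, following the same pattern as \cite{Bayrakdar2018,OkBayrakdar2019}.

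There is no substantial obstacle here; the content is genuinely computational, and the only place care is needed is in consistently applying the sign convention $d\eta^{i}=-\theta^{i}{}_{j}\wedge\eta^{j}$ together with the antisymmetry of $\theta$, so that the six off-diagonal entries really reduce to the three independent 1-forms $\alpha,\beta,\delta$ as arranged in the matrix.
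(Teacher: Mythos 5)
Your proposal is correct and follows essentially the same route as the paper, which likewise sets up the first structure equation $d\eta_g=-\theta\wedge\eta_g$ for a skew-symmetric $\theta$ in the orthonormal coframe and reads off $\alpha,\beta,\delta$ by comparison with the structure equations (the paper only sketches this, deferring to \cite{Bayrakdar2018,OkBayrakdar2019}). Your verification of the three torsion equations, combined with uniqueness of the Levi-Civita connection, is exactly the intended argument and checks out.
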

By means of the connection form $\theta$ the covariant derivative of $\xi_j$ relative to $X\in \Gamma(T\Sigma)$ is given by
\begin{equation}
\label{conn1}
\nabla_X \xi_j =\sum_i \theta^i_j (X)\xi_i,
\end{equation}
where $\theta^1_2=-\alpha, \theta^1_3=-\beta,\theta^2_3=-\delta$. Accordingly we obtain
\begin{equation}
\nabla_{\xi_1} \xi_1=0, ~\nabla_{\xi_1} \xi_2=-(f_y -1)\xi_3 ,~\nabla_{\xi_1} \xi_3=(f_y -1)\xi_2,
\end{equation}
\begin{equation}\label{geo1}
\nabla_{\xi_2} \xi_1=(f_y +1)\xi_3, \nabla_{\xi_2} \xi_2=0, ~\nabla_{\xi_2} \xi_3=-(f_y +1)\xi_1 ,
\end{equation}
\begin{equation}
\nabla_{\xi_3} \xi_1=(f_y +1)\xi_2+2f_p\xi_3, ~\nabla_{\xi_3} \xi_2=-(f_y +1)\xi_1 ,~\nabla_{\xi_3} \xi_3=-2f_p\xi_1.
\end{equation}
For the details on a connection in a vector bundle and a Lie algebra-valued 1-form, one may consult \cite{Morita2001}.

\subsection{Contact Metric Structure Determined by $\eta^2$}
Since $\eta^2$ is a contact form on $\Sigma$,  $(\Sigma,\eta)$ is a contact manifold with the characteristic vector field $\xi_2=2\frac{\partial}{\partial y}$. It is clear that $\eta^2(\xi_2)=1$ and $d\eta^2(X,\xi_2)=0$ for any $X \in \Gamma(T\Sigma)$. Writing $X=X^i \xi_i$ we see that $X^2=\eta^2(X)=g(X,\xi_2)$ holds for the Riemannian metric  (\ref{metric}). If we introduce the (1,1) tensor field 
\begin{equation}
\phi= \eta^3\otimes \xi_1-\eta^1\otimes \xi_3,
\end{equation}
then we get
\begin{equation}
\phi^2=-\textrm{id}+\eta^2\otimes\xi_2=- (\eta^1\otimes \xi_1+\eta^3\otimes \xi_3).
\end{equation}
It is easy to see that
\begin{equation}
d\eta^2(X,Y)=g\left(X,\phi(Y)\right)=X^1Y^3-Y^1X^3 
\end{equation}
and
\begin{equation}
g(\phi(X),\phi(Y))=g(X,Y)-\eta(X)\eta(Y)=X^1Y^1+X^3Y^3
\end{equation}
are satisfied for any $X,Y\in \Gamma(T\Sigma)$. Accordingly, we have obtained a contact metric structure $(\phi,\xi_2,\eta^2, g)$ on $\Sigma$. Clearly we have $\phi(\xi_2)=0$,  $\eta^2\circ \phi =0$.

Since the Lie derivative $\mathcal{L}_\xi$ is a derivation on the algebra of tensor fields on $\Sigma$, if we use the identities 
\begin{equation}
\mathcal{L}_\xi \omega=\iota_\xi d\omega+d(\iota_\xi \omega)
\end{equation}
and
\begin{equation}\label{voT}
\mathcal{L}_\xi X=[\xi,X]= \nabla_\xi X- \nabla_X \xi,
\end{equation}
for a 1-form $\omega$ and a vector field $X$ on $\Sigma$, we get
\begin{equation}
\mathcal{L}_{\xi_2} \phi = -2f_y \left(\eta^1\otimes \xi_1-\eta^3\otimes \xi_3\right).
\end{equation}
Here $\iota$ denotes the contraction and the identity (\ref{voT}) manifests that the connection $\nabla$ is torsion free. Since $2h=\mathcal{L}_{\xi_2} \phi$ we have
\begin{equation}
h= -f_y \left(\eta^1\otimes \xi_1-\eta^3\otimes \xi_3\right).
\end{equation}
Also the Lie derivative of the metric tensor (\ref{metric}) is obtained as
\begin{equation}
\mathcal{L}_{\xi_2} g = -2f_y \left(\eta^1\otimes \eta^3+\eta^3\otimes \eta^1\right).
\end{equation}
It follows that $\xi_2$ is Killing vector field if and only if $f_y=0$. This is equivalent to say that the contact metric structure $(\phi,\xi_2,\eta^2, g)$ is $K$-contact iff $f_y=0$. 
\begin{theorem}\label{thm2}
	Let $\Sigma$ be the submanifold in $J^2\pi$ corresponding to a  second order ODE of the form $y''= f(x,y')$. Then $(\phi,\xi_2,\eta^2, g)$ defines a Sasakian structure on $\Sigma$, where $\phi,\xi_2, \eta^2,$ and $g$ are given in local coordinates $(x,y,p)$ respectively by
	\begin{equation}
	\phi= (dp-fdx)\otimes (\partial_x+p\partial_y+f\partial_p)-dx\otimes \partial_p,~~\xi_2=2\partial_y,~~\eta^2=\frac{1}{2}(dy-pdx)
	\end{equation}
	and 
	\begin{eqnarray*}
		g &=& \frac{1}{4}\bigg[(1+p^2+f^2)dx\otimes dx-p(dx\otimes dy+dy\otimes dx)-f(dx\otimes dp+dp\otimes dx)\\ 
		&+&dy\otimes dy+dp\otimes dp\bigg].
	\end{eqnarray*}
	
\end{theorem}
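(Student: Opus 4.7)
The plan is to reduce the theorem to two ingredients that are already in place in the excerpt: the computation of $\mathcal{L}_{\xi_2} g$ carried out just above the theorem, and the standard equivalence of $K$-contact and Sasakian in dimension three noted in the preliminaries. The hypothesis $y'' = f(x, y')$ means precisely $f_y = 0$, so from the displayed identity $\mathcal{L}_{\xi_2} g = -2f_y(\eta^1 \otimes \eta^3 + \eta^3 \otimes \eta^1)$ one reads off $\mathcal{L}_{\xi_2} g = 0$. Equivalently (as also follows from $h = -f_y(\eta^1\otimes \xi_1 - \eta^3 \otimes \xi_3)$), the contact metric structure $(\phi, \xi_2, \eta^2, g)$ is $K$-contact. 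Since $\dim \Sigma = 3$, the principle cited from \cite{Blair2002} that a three-dimensional $K$-contact structure is automatically Sasakian upgrades this to the Sasakian property.

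The remainder of the statement is a translation of the intrinsic objects $(\phi, \xi_2, \eta^2, g)$ into the coordinates $(x, y, p)$. The expressions $\xi_2 = 2\partial_y$ and $\eta^2 = \tfrac{1}{2}(dy - p\,dx)$ are immediate from (\ref{vectorfields}) and (\ref{exterior diff system}). Substituting $\eta^1 = \tfrac{1}{2}\,dx$, $\eta^3 = \tfrac{1}{2}(dp - f\,dx)$, $\xi_1 = 2(\partial_x + p\partial_y + f\partial_p)$ and $\xi_3 = 2\partial_p$ into the intrinsic definition $\phi = \eta^3 \otimes \xi_1 - \eta^1 \otimes \xi_3$ yields the stated coordinate form of $\phi$. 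Expanding $g = \sum_i \eta^i \otimes \eta^i$ with the same substitutions and collecting terms by type ($dx\otimes dx$, the symmetric combinations $dx\otimes dy + dy\otimes dx$ and $dx\otimes dp + dp\otimes dx$, and the pure $dy$, $dp$ pieces) gives the displayed matrix representation of $g$.

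I do not anticipate a genuine obstacle, since the hard work has been done in the preceding subsection: the isolation of $f_y$ as the obstruction to $\xi_2$ being Killing is already explicit, and the three-dimensional $K$-contact $\Leftrightarrow$ Sasakian statement is invoked as a known fact. If one wished to avoid citing that principle, the alternative would be a direct verification of $(\nabla_X \phi) Y = -g(X, Y)\xi_2 - \eta^2(Y)X$ on all pairs $(\xi_i, \xi_j)$ using the covariant derivatives (\ref{geo1}) and their neighbors with $f_y = 0$; this is nine bookkeeping identities, but each reduces to a short computation involving $\phi(\xi_1) = -\xi_3$, $\phi(\xi_2) = 0$, $\phi(\xi_3) = \xi_1$ and the values of $\nabla_{\xi_i}\xi_j$. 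Either route produces the result, but the appeal to the three-dimensional equivalence is by far the cleaner argument.
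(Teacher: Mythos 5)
Your argument is correct and is essentially the paper's own: the paper establishes $\mathcal{L}_{\xi_2} g = -2f_y(\eta^1\otimes\eta^3+\eta^3\otimes\eta^1)$ immediately before the theorem, so $f_y=0$ gives the $K$-contact property, and the Sasakian conclusion follows from the three-dimensional equivalence cited in the preliminaries, with the coordinate expressions obtained by the same direct substitution you describe. No gap to report.
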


\section{Hamiltonian structure}

An autonomous dynamical system $\dot{x}=v(x)$ is a vector field on a smooth manifold $\Sigma$ endowed with a Poisson structure is said to be Hamiltonian if it can be written as
\begin{equation}
\label{e1}
v=\Omega (dH,\cdot).
\end{equation} 
Here $H$ is called Hamiltonian function and $\Omega$ is the Poisson bi-vector, which is a  skew-symmetric, contravariant rank two tensor subjected to the Jacobi identity
\begin{equation}
\label{e2}
[\Omega,\Omega]_{\textrm{SN}}=0,
\end{equation} 
where $[\cdot,\cdot]_{\textrm{SN}}$ denotes the Schouten-Nijenhuis bracket. In a local coordinate system $(x^i)$, $\Omega$ is given by
\begin{equation}
\label{e3}
\Omega= \Omega^{ij} \partial_i\wedge \partial_j,~~~~\partial_i=\frac{\partial}{\partial x^i} 
\end{equation} 
for which the Jacobi identity is given by
\begin{equation}
\label{e4}
\Omega^{i[j}\partial_i\Omega^{kl]}=0,
\end{equation} 
where $[jkl]$ denotes the anti-symmetrization.

Instead of Poisson bi-vector $\Omega$, one can deal with the 1-form  $\mathcal{J}$, so called Poisson 1-form associated with $\Omega$, which is defined by the contraction of the volume form by $\Omega$ \cite{Gumral1993}:
\begin{equation}
\label{e7}
\mathcal{J} =\imath_\Omega \textrm{vol},
\end{equation}
where $\imath_\Omega \textrm{vol}$ is defined by the pairing $\langle \cdot,\cdot\rangle$ between differential forms and multi-vectors:
\begin{equation}
\iota_\Omega \textrm{vol}(X)=\langle \textrm{vol},\Omega\wedge X\rangle,~~~~~~X\in\Gamma(T\Sigma).
\end{equation} 
In this case the Jacobi identity is interpreted as
\begin{equation}
\label{Jcb}
\mathcal{J}\wedge d\mathcal{J}=0.
\end{equation}

Now, let us introduce the bi-vector on the contact metric manifold \\
$(\Sigma,\phi, \xi_2,\eta^2, g)$ as 
\begin{equation}
\label{Poisson}
\Omega=\xi_1\wedge\xi_2. 
\end{equation} 
Here $f_y=0$ does not necessarily hold. It follows from (\ref{e7}) that
\begin{equation}
\label{Poisson12}
\eta^3 =\imath_\Omega\textrm{vol}_g,
\end{equation} 
where $\textrm{vol}_g = \eta^1\wedge\eta^2\wedge\eta^3$ is the volume form on $\Sigma$.
From (\ref{Jcb}) and the structure equations (\ref{strcture}) we obtain
\begin{equation}\label{Poneform}
\eta^3\wedge d\eta^3= 2f_y \textrm{vol}_g.
\end{equation} 
It follows that the Jacobi identity for  the bi-vector $\Omega=\xi_1\wedge\xi_2$ is described  simply by the  linear partial differential equation $f_y=0$. Thus, the bi-vector (\ref{Poisson}) is a Poisson structure or the 1-form (\ref{Poisson12}) is a Poisson 1-form on $(\Sigma,\phi,\xi_2,\eta^2, g)$ if and only if $f_y=0$. Together with the Theorem (\ref{thm2}) the following is immediate.

\begin{theorem}
	Let $(\Sigma,\phi,\xi_2,\eta^2, g)$ be the contact metric manifold  associated with a second order ODE $y''= f(x,y,y')$. The bi-vector $\Omega=\xi_1\wedge\xi_2$ on $\Sigma$
	defines a Poisson structure iff $(\phi, \xi_2,\eta^2, g)$ is a Sasakian structure on $\Sigma$.
\end{theorem}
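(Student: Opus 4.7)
The plan is to chain together two equivalences that have already been established in the preceding sections, with $f_y=0$ serving as the common pivot condition.

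First I would handle the Poisson side. The bi-vector $\Omega=\xi_1\wedge\xi_2$ satisfies the Jacobi identity $[\Omega,\Omega]_{\textrm{SN}}=0$ if and only if its associated Poisson 1-form $\mathcal{J}=\iota_\Omega \textrm{vol}_g$ satisfies $\mathcal{J}\wedge d\mathcal{J}=0$. Equation (\ref{Poisson12}) identifies $\mathcal{J}=\eta^3$, and equation (\ref{Poneform}) computes $\eta^3\wedge d\eta^3 = 2f_y\,\textrm{vol}_g$. Hence $\Omega$ is Poisson iff $f_y=0$.

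Next I would handle the Sasakian side. Earlier in Section 3.1 it was shown that $\mathcal{L}_{\xi_2} g = -2f_y(\eta^1\otimes\eta^3+\eta^3\otimes\eta^1)$, so $\xi_2$ is Killing iff $f_y=0$, i.e.\ $(\phi,\xi_2,\eta^2,g)$ is $K$-contact iff $f_y=0$. Invoking the three-dimensional equivalence between $K$-contact and Sasakian structures recalled in the preliminaries (and recorded as Theorem \ref{thm2} for the case $f=f(x,p)$), we conclude that the contact metric structure is Sasakian iff $f_y=0$.

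Combining the two equivalences gives the theorem: both conditions reduce to the same linear PDE $f_y=0$ on $f$. There is essentially no obstacle here; the work has been done in the preceding propositions, and the proof amounts to citing equations (\ref{Poneform}), the Lie-derivative computation for $\mathcal{L}_{\xi_2}g$, and the standard fact that $K$-contact and Sasakian coincide in dimension three.
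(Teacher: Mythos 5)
Your proposal is correct and follows essentially the same route as the paper: the paper also pivots on $f_y=0$, using $\eta^3\wedge d\eta^3=2f_y\,\mathrm{vol}_g$ for the Poisson side and the Killing/$K$-contact computation $\mathcal{L}_{\xi_2}g=-2f_y(\eta^1\otimes\eta^3+\eta^3\otimes\eta^1)$ together with Theorem \ref{thm2} and the three-dimensional equivalence of $K$-contact and Sasakian for the other side. Nothing is missing; the paper itself treats the statement as immediate from these same facts.
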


\subsection{Bi-Hamiltonian structure}
The Hamiltonian vector field $v=\Omega (dH,\cdot)$ associated with the Poisson structure $\Omega=\xi_1\wedge\xi_2$ on the Sasakian manifold $(\Sigma,\phi, \xi_2,\eta^2, g)$ is given in terms of  $\xi_i$ as 
\begin{equation}
\label{Hvf}
v=\Omega (dH,\cdot)= \xi_1(H)\xi_2-\xi_2 (H)\xi_1.
\end{equation} 
To find the coordinate expression of $v$, if we substitute $
\xi_1= 2 ({\partial_x}+p{\partial_y}+f {\partial_p})$ and $\xi_2=2{\partial_y}$ we get
\begin{equation}
\label{Hvf2}
v=- 4\left[H_y \partial_x-(H_x+fH_p)\partial_y+fH_y\partial_p\right].
\end{equation}
The equations of motion are given in coordinates as
\begin{eqnarray}
\dot{x}&=&-4H_y\notag\\
\dot{y}&=&4(H_x+fH_p)\label{systemy}\\
\dot{p}&=&-4fH_y.\notag
\end{eqnarray}
Clearly we have $v(H)=0$. 
If we consider the definition of the cross product 
\begin{equation}
g(X\times Y, Z)=\textrm{vol}_g (X,Y,Z),
\end{equation}
then we see that \begin{equation}
\xi_i\times \xi_j=\epsilon_{ijk}\xi_k,
\end{equation}
where $\epsilon_{ijk}$ is the Levi-Civita symbol. Thus $\xi_1,\xi_2,\xi_3$ defines a right-handed orthonormal basis for $T_x \Sigma$ at a point $x$. It follows from here that the Hamiltonian vector field (\ref{Hvf})  is written as  
\begin{equation}
\label{Hvf31}
v= J\times \nabla H.
\end{equation} 
where $J=(\eta^3)^\sharp=\xi_3$ is called the Poisson vector field corresponding to the Poisson structure $\Omega=\xi_1\wedge\xi_2$ and $\nabla H = \sum_i \xi_i(H)\xi_i$ is the gradient of Hamiltonian function $H$ with respect to the metric (\ref{metric}). Here the musical isomorphism $\sharp: T^* \Sigma\rightarrow T\Sigma$ is defined by $g(\omega^\sharp, X)=\omega(X)$ for any 1-form $\omega$ and any vector field $X$. The inverse of $\sharp$ is denoted by $\flat$. For our purposes we shall give the definition of a bi-Hamiltonian structure of a vector field $v$ and the compatibility in following form \cite{Olver1990}:

\begin{definition}
	A dynamical system $\dot{x}=v$ is said to be \textit{bi-Hamiltonian} if it can be written in the Hamiltonian form in two distinct ways:
	\begin{equation}\label{biHamdef}
	v= \Omega_1 (dH_2, .)=\Omega_2 (dH_1, .),
	\end{equation} 
	such that the Poisson structures $\Omega_1$ and $\Omega_2$ are nowhere multiples of each other. This bi-Hamiltonian structure is said to be \textit{compatible} if $\Omega_1+\Omega_2$ is also a Poisson structure.  
\end{definition}

From the Hamilton's equations (\ref{Hvf31}) we see that the Poisson vector field is perpendicular to the Hamiltonian vector field. Let us introduce the bi-vector 
\begin{equation}\label{omega2bivect}
\Omega_2 = \mu\xi_3\wedge\xi_1+\lambda\xi_2\wedge\xi_3,
\end{equation} 
where $\lambda = \xi_1(H)$ and $\mu = \xi_2(H)$, and prove the following theorem:
\begin{theorem}
	Let $(\Sigma,\phi,\xi_2,\eta^2, g)$ be the contact metric manifold associated with a second order ODE $y''= f(x,y,y')$. Then  the bi-vector (\ref{omega2bivect})
	defines a Poisson structure on $\Sigma$ iff $\mu=0$.
\end{theorem}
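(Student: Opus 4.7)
The plan is to apply the Poisson 1-form reformulation (\ref{Jcb}) of the Jacobi identity, which is natural here since $\dim\Sigma=3$. First I would compute the Poisson 1-form $\mathcal{J}_2=\iota_{\Omega_2}\textrm{vol}_g$. Since $(\xi_1,\xi_2,\xi_3)$ is orthonormal with dual coframe $(\eta^1,\eta^2,\eta^3)$ and $\textrm{vol}_g=\eta^1\wedge\eta^2\wedge\eta^3$, one checks directly that $\iota_{\xi_3\wedge\xi_1}\textrm{vol}_g=\eta^2$ and $\iota_{\xi_2\wedge\xi_3}\textrm{vol}_g=\eta^1$, whence $\mathcal{J}_2=\lambda\,\eta^1+\mu\,\eta^2$.

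Next I would differentiate using the structure equations (\ref{strcture}): $d\eta^1=0$ and $d\eta^2=2\eta^1\wedge\eta^3$ give
\begin{equation*}
d\mathcal{J}_2=d\lambda\wedge\eta^1+d\mu\wedge\eta^2+2\mu\,\eta^1\wedge\eta^3.
\end{equation*}
Wedging with $\mathcal{J}_2=\lambda\eta^1+\mu\eta^2$, all contributions containing a repeated $\eta^1$ or $\eta^2$ drop out, so that only the $\eta^3$-components of $d\lambda$ and $d\mu$, together with the last summand above, can survive. Expanding $d\lambda=\sum_i\xi_i(\lambda)\eta^i$ and $d\mu=\sum_i\xi_i(\mu)\eta^i$ and collecting signs, one obtains
\begin{equation*}
\mathcal{J}_2\wedge d\mathcal{J}_2=\bigl[\mu\,\xi_3(\lambda)-\lambda\,\xi_3(\mu)-2\mu^2\bigr]\textrm{vol}_g.
\end{equation*}

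It remains to analyse when the bracketed scalar vanishes. The ``if'' direction is immediate: $\mu=\xi_2(H)=2H_y=0$ forces $\xi_3(\mu)=4H_{yp}=0$ as well, killing every summand. For the ``only if'' direction I would substitute $\lambda=\xi_1(H)$ and $\mu=\xi_2(H)$ and recognise the vanishing of the bracket as the Wronskian-in-$p$ identity $H_y\,\partial_p(H_x+fH_p)-H_{yp}\,(H_x+fH_p)=0$; invoking the geometric framework of the preceding theorem, where $\Omega_1=\xi_1\wedge\xi_2$ is Poisson (so $f_y=0$) and the Hamiltonian vector field $v=\Omega_1(dH,\cdot)$ is a multiple of the Reeb vector field $\xi_2$, one concludes $H_y=0$. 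I expect this last step, rather than the calculation itself, to be the main obstacle, since the displayed identity also admits spurious solutions with $H_y\neq 0$ (whenever $(H_x+fH_p)/H_y$ is independent of $p$), and ruling these out requires appealing to the bi-Hamiltonian setting outside the Jacobi computation alone.
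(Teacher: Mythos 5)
Your computational part coincides exactly with the paper's proof: the 1-form $\imath_{\Omega_2}\textrm{vol}_g=\lambda\eta^1+\mu\eta^2$, the identity $\eta\wedge d\eta=\left(\mu\xi_3(\lambda)-\lambda\xi_3(\mu)-2\mu^2\right)\textrm{vol}_g$ (the paper's (\ref{Poneform2})--(\ref{Jacobiid})), and the trivial ``if'' direction are all the same. The genuine gap is the ``only if'' direction, which you never actually prove: ``invoking the geometric framework of the preceding theorem \dots one concludes $H_y=0$'' is a promissory note rather than an argument, and you say yourself that you cannot rule out solutions of $H_y\,\partial_p(H_x+fH_p)-H_{yp}\,(H_x+fH_p)=0$ with $H_y\neq 0$. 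As written, your proposal only establishes that the Jacobi identity for $\Omega_2$ is equivalent to that scalar condition, not the stated equivalence with $\mu=0$.

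For comparison, the paper closes this step dynamically. Assuming $\mu\neq 0$, the condition (\ref{Jacobiid}) becomes $\partial_p(\lambda/\mu)=1$, so $\lambda/\mu=p-\psi(x,y)$, i.e.\ $H_x+fH_p=-H_y\,\psi(x,y)$ as in (\ref{conservative0}); the paper then eliminates $t$ from the Hamilton equations (\ref{systemy}), identifies $dy/dx=-(H_x+fH_p)/H_y$ with $p$ and $dp/dx$ with $f$, deduces $p=\psi(x,y)$, $f=\psi_x+\psi\psi_y$ and $\frac{d}{dx}H=0$ along the resulting curves, concludes $\lambda=0$, and then the Jacobi identity collapses to $-2\mu^2=0$, contradicting $\mu\neq 0$. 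Note that this is precisely the ``extra input beyond the Jacobi computation'' you anticipated: the step $dy/dx=p$ imports the contact condition (integral curves projecting onto solutions of the ODE) and is not a consequence of $\eta\wedge d\eta=0$ alone. Your worry about spurious solutions is substantive rather than a technicality: for instance $H=y$ gives $\lambda=2p$, $\mu=2$, $\eta=dy$, hence $\eta\wedge d\eta=0$ with $\mu\neq 0$ (indeed $\Omega_2=8\,\partial_p\wedge\partial_x$ is manifestly Poisson), so the implication cannot follow from the wedge identity alone and some dynamical restriction of the paper's kind on $H$ is genuinely needed. Supplying and justifying that restriction is exactly what is missing from your proposal, so it is incomplete at the decisive step --- though you deserve credit for locating the difficulty precisely where the paper's own argument has to do its real (and, one may argue, delicate) work.
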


\begin{proof}
	The corresponding 1-form is obtained by $\eta=\imath_{\Omega_2}\textrm{vol}_g$ as
	\begin{equation}
	\eta= \lambda\eta^1+\mu\eta^2 =dH-\xi_3(H)\eta^3,
	\end{equation}  
	where $\xi_3(H)=2H_p$. From
	\begin{equation}\label{Poneform2}
	\eta\wedge d\eta= \left(\mu\xi_3(\lambda)-\lambda\xi_3(\mu)-2\mu^2\right) \textrm{vol}_g
	\end{equation} 
	it follows that $\eta$ is a Poisson 1-form iff
	\begin{equation}\label{Jacobiid}
	\mu\xi_3(\lambda)-\lambda\xi_3(\mu)-2\mu^2=0.
	\end{equation}
	If $\mu=0$, then $\eta$ is a Poisson 1-form. Conversely,  assume that $\eta$ is a Poisson 1-form and $\mu\neq 0$. Then (\ref{Jacobiid}) takes the form $\partial_p (\lambda/\mu)=1$ and hence we write 
	\begin{equation}
	\frac{\lambda}{\mu}= p-\psi(x,y)
	\end{equation}
	for some smooth function $\psi=\psi(x,y)$.
	Substituting $\lambda=\xi_1(H)=2(H_x+pH_y+fH_p)$ and $\mu=\xi_2(H)=2H_y$ into here we get
	\begin{equation}\label{conservative0}
	H_x+fH_p= -H_y\psi(x,y)
	\end{equation}
	Since $H_y\neq 0$,  
	eliminating $dt$ from (\ref{systemy}) we may write
	\begin{equation}\label{eliminatet}
	\frac{dy}{dx}=-\frac{H_x+fH_p}{H_y}=p,~~~~\frac{dp}{dx}=\frac{fH_y}{H_y}=f.
	\end{equation}
	It follows from (\ref{eliminatet}) and (\ref{conservative0})
	that $dy/dx=\psi(x,y)$ and  $f= \psi_x+\psi\psi_y$. 
	Using these and writing (\ref{conservative0}) as
	\begin{equation}\label{conservative}
	(\partial_x+\psi(x,y)\partial_y+f(x,p)\partial_p)H=0
	\end{equation}
	we obtain
	\begin{equation}
	\frac{d}{dx}H\left(x,y(x),p\left(x,y(x)\right)\right)=0.
	\end{equation}
	This implies that $\lambda =0$ and hence the Jacobi identity is given by $\eta\wedge d\eta=-2\mu^2\textrm{vol}_g=0$ which contradicts  $\mu\neq 0$. \qed	
\end{proof}	 
We now have two Poisson structures $\Omega_1=\Omega=\xi_1\wedge\xi_2$ and $\Omega_2 = \lambda\xi_2\wedge\xi_3$ and the Hamiltonian vector field
\begin{equation}
\label{Hvf3}
v= \xi_1(H)\xi_2
\end{equation}
on the Sasakian manifold $(\Sigma,\phi,\xi_2,\eta^2, g)$. Note that the Hamiltonian vector field $v$ is a multiple of the Reeb vector field.
We shall show that $v$ can be written in bi-Hamiltonian form with respect to the Poisson structures $\Omega_1=\xi_1\wedge \xi_2$ and $\Omega_2 = \lambda\xi_2\wedge\xi_3$ such that this bi-Hamiltonian structure is compatible.
We may write $v= \xi_1(H)\xi_2$ as
\begin{equation}
v= \xi_3\times \nabla H
\end{equation} 
where $H = H(x,p)$ is the Hamiltonian function with $\xi_1(H)\neq 0$ and $\xi_3=(\eta^3)^\sharp$ is the Poisson vector field corresponding to the Poisson structure $\Omega_1=\xi_1\wedge\xi_2$. This suggests that on the Sasakian manifold  $(\Sigma,\phi,\xi_2,\eta^2, g)$ the right-hand side of $d^2y/dx^2 = f(x,p)$ can be taken as Hamiltonian function, i.e. $H =f$ whenever $f_x+ff_p\neq 0$, where $p=y'$. In this case  (\ref{Hvf3}) is given by $v= 2(f_x+ff_p)\xi_2$ 
and from (\ref{geo1}) we have $\nabla_v v=0$, that is, integral curves of the Hamiltonian vector field $v$ are geodesic curves on $\Sigma$. As is well known an integral curve of the Reeb vector field is a geodesic curve but this is not true for an arbitrary multiple of the Reeb vector field.

If we introduce the function $H_1=-\frac{1}{2}p$, then 
we get
\begin{equation}
\nabla H_1 = -f\xi_1-\xi_3,
\end{equation}
and
\begin{equation}
v=  J_2\times \nabla H_1,
\end{equation} 
where
\begin{equation}
J_2= \xi_1(H)\xi_1
\end{equation}
is the Poisson vector field corresponding to the Poisson structure $\Omega_2 = \lambda\xi_2\wedge\xi_3$ and is defined by $J_2=\eta^\sharp$. Here $\eta=\xi_1(H)\eta^1$. This shows that we have the bi-Hamiltonian structure for $v$, that is,
\begin{equation}
v= \xi_3\times \nabla H= J_2\times \nabla H_1.
\end{equation} 
Since $H_y=0$, this bi-Hamiltonian structure is compatible, that is, in terms of the corresponding Poisson 1-forms the compatibility condition holds identically:
\begin{equation}
(\eta^3+\eta)\wedge d(\eta^3+\eta)=\eta\wedge d\eta^3+\eta^3\wedge d\eta=\eta^3\wedge d\eta=0.
\end{equation} 
As a result we have the following:
\begin{theorem}
	Let $(\Sigma,\phi,\xi_2,\eta^2, g)$ be the Sasakian manifold associated with a second order ODE $y''= f(x,y')$,where $f_x+ff_p\neq 0$. Then the vector field $v= \xi_1(H)\xi_2$ is written in bi-Hamiltonian form 
	\begin{equation}
	v=\Omega_1 (dH,\cdot)=\Omega_2 (dH_1,\cdot), ~~~~H=H(x,p),~H_1=-\frac{1}{2}p
	\end{equation}
	with respect to the Poisson structures $\Omega_1=\xi_1\wedge\xi_2$, $\Omega_2 = \xi_1(H)\xi_2\wedge\xi_3$ such that this bi-Hamiltonian structure is compatible.
\end{theorem}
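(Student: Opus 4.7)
The plan is to verify each of the three assertions in turn: that $\Omega_1$ and $\Omega_2$ are genuinely Poisson, that $v=\xi_1(H)\xi_2$ arises from both in the form claimed, and that $\Omega_1+\Omega_2$ is Poisson as well. Throughout, two hypotheses do the work: the Sasakian hypothesis forces $f_y=0$, and restricting to $H=H(x,p)$ forces $\xi_2(H)=2H_y=0$. I would state these as standing reductions at the outset so that both structure equations simplify to $d\eta^1=0$, $d\eta^2=2\eta^1\wedge\eta^3$, $d\eta^3=2f_p\,\eta^1\wedge\eta^3$, and $\lambda=\xi_1(H)=2(H_x+fH_p)$ depends only on $x$ and $p$.

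First, that $\Omega_1=\xi_1\wedge\xi_2$ is Poisson is exactly the content of the theorem characterising the Sasakian condition, and that $\Omega_2=\lambda\,\xi_2\wedge\xi_3$ with $\lambda=\xi_1(H)$ is Poisson follows from the preceding theorem once one notes that $\mu=\xi_2(H)=0$. For the Hamiltonian representations, I would unpack both formulas component-wise in the frame $(\xi_1,\xi_2,\xi_3)$: for $\Omega_1(dH,\cdot)=\xi_1(H)\xi_2-\xi_2(H)\xi_1$, the second term drops since $H_y=0$, giving $v=\xi_1(H)\xi_2$; and for $\Omega_2(dH_1,\cdot)=\lambda\bigl(\xi_2(H_1)\xi_3-\xi_3(H_1)\xi_2\bigr)$ with $H_1=-\tfrac12 p$, one computes $\xi_2(H_1)=0$ and $\xi_3(H_1)=-1$, so $\Omega_2(dH_1,\cdot)=\lambda\,\xi_2=\xi_1(H)\xi_2=v$.

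The main obstacle is the compatibility check, which I would attack through the Poisson 1-forms. Setting $\eta=\imath_{\Omega_2}\mathrm{vol}_g=\lambda\,\eta^1$, compatibility amounts to $(\eta^3+\eta)\wedge d(\eta^3+\eta)=0$, which expands to
\begin{equation}
\eta^3\wedge d\eta^3+\eta\wedge d\eta+\eta^3\wedge d\eta+\eta\wedge d\eta^3=0.
\end{equation}
The first two terms vanish individually because each of $\Omega_1,\Omega_2$ is Poisson. For the cross terms, using $d\eta^1=0$ one has $d\eta=d\lambda\wedge\eta^1$, and using $f_y=0$ one has $d\eta^3=2f_p\,\eta^1\wedge\eta^3$. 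Then $\eta\wedge d\eta^3=\lambda\,\eta^1\wedge(2f_p\,\eta^1\wedge\eta^3)=0$ trivially, while $\eta^3\wedge d\eta=\eta^3\wedge d\lambda\wedge\eta^1$ vanishes precisely when the $\eta^2$-component of $d\lambda$ is zero, i.e. when $\xi_2(\lambda)=0$. A short computation gives $\xi_2(\lambda)=4(H_{xy}+f_yH_p+fH_{py})$, which is zero under the two standing reductions $f_y=0$ and $H_y=0$. This closes the argument.

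The step I expect to be delicate is not any single calculation but rather keeping transparent which reduction ($f_y=0$ from Sasakian versus $H_y=0$ from $H=H(x,p)$) is responsible for each vanishing, since a naive pairing of terms leaves the impression that $\Omega_1+\Omega_2$ being Poisson needs a new condition beyond the two Jacobi identities, whereas in fact both cross terms collapse under these same hypotheses.
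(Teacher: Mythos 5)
Your proposal is correct and follows essentially the same route as the paper: invoke the two preceding theorems for the Poisson property of $\Omega_1$ and $\Omega_2$ (via $f_y=0$ and $\mu=\xi_2(H)=0$), verify both Hamiltonian representations of $v=\xi_1(H)\xi_2$, and check compatibility through the Poisson 1-forms $\eta^3$ and $\eta=\lambda\eta^1$. The only differences are cosmetic: you contract $\Omega_2$ with $dH_1$ directly where the paper uses the cross-product form $v=J_2\times\nabla H_1$, and you spell out the cross-term vanishing ($\xi_2(\lambda)=0$) that the paper asserts follows from $H_y=0$ and $f_y=0$.
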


\subsection{Bi-Hamiltonian structure of the Reeb vector field}
As a particular case we shall  investigate a bi-Hamiltonian structure of the Reeb vector $\xi_2$ on the Sasakian manifold $(\Sigma,\phi,\xi_2,\eta^2, g)$ associated with a second order ODE $y''= f(x,y')$.

From the structure equations (\ref{strcture}) we see that $\eta^3$ canonically defines a Poisson 1-form when $f_p=0$. In this case we have
\begin{eqnarray}\label{strctureH}
d\eta^1&=&0 \nonumber \\
d\eta^2 &=& 2\eta^1\wedge\eta^3 \\
d\eta^3 &=& 0. \nonumber
\end{eqnarray}
Since all the structure functions are constants, they can be identifed with the structure constants of a (local) Lie group $G$ of transformations of $\Sigma$ preserving the coframe $\eta_g = (\eta^1,\eta^2,\eta^3)^t$. In this case the 1-forms $\eta^1,\eta^2,\eta^3$ will
correspond to a basis for the left-invariant Maurer-Cartan forms on $G$ and $\eta_g$ defines a rank zero coframe
or an invariant coframe on $G$ \cite{Olver1993,Olver1995}. As a consequence, the structure equations (\ref{strctureH})  manifest that $\Sigma$ is locally diffeomorphic to the Heisenberg group and $\eta_g$ is identified with the Maurer-Cartan coframe. In this case the structure equations are regarded as the Maurer-Cartan equations of the Heisenberg group. Since $\eta^i$'s are invariant 1-forms, Riemannian metric (\ref{metric}) defines an invariant metric  on $\Sigma$ \cite{Milnor1976}.

Since $f_y=f_p=0$, the compatible bi-Hamiltonian structure for the left-invariant Reeb vector field is given by 
\begin{equation}\label{hsb}
\xi_2= \xi_3\times \nabla H = J_2\times \nabla H_1,
\end{equation} 
where $H= \frac{1}{2}x$, $J_2 = \nabla H=\xi_1$ and $H_1 =\frac{1}{2}\int^x fdx' -\frac{1}{2}p$. This shows that the independent variable $x$ defines a Hamiltonian function for $\xi_2$. The contact 1-form $\eta^2=1/2(dy-pdx)$ defines a nonintegrable distribution $\mathscr{D}= \ker\eta^2$  in $\Sigma$ and an integral curve of (\ref{hsb}) corresponds to a non-horizontal geodesic on $\Sigma$. It is also suitable to note that since (1,1) tensor $\phi$ is a linear transformation on a horizontal subspace, the bi-Hamiltonian structure of the Reeb vector field is also given by means of $\phi$.

\section*{Concluding remark}

As a final remark let us consider the normal bundle of an integral curve of the Hamiltonian vector field $v= \xi_1(H)\xi_2$  as a two dimensional real vector bundle $\mathscr{D}= \ker\eta^2$ over $\Sigma$. Together with the fiber-wise defined Riemannian metric $g_\mathscr{D} = \varpi^1\otimes\varpi^1+\varpi^2\otimes\varpi^2$, each fiber is spanned by the orthonormal vector fields $\xi_1$ and $\xi_3$, where $\varpi^1 =\eta^1$ and  $\varpi^2 =\eta^3$. The structure equations result in
\begin{equation}
d\varpi^i = - \omega^i_j\wedge\varpi^j,
\end{equation} 
where 
\begin{equation}
(\omega^i_j)= \left(\begin{array}{cc}0 & -2f_p \varpi^2 \\
2f_p \varpi^2 & 0
\end{array}\right)
\end{equation} 
defines connection form in the vector bundle $\mathscr{D}\rightarrow \Sigma$. The (1,1) tensor $\phi$ defines complex structure on the fibers of $\mathscr{D}$ and hence  $\mathscr{D}$ can be regarded as a one-dimensional complex line bundle. The complex connection form is defined to be $1\times 1$ matrix $(i\omega^1_2)$ with corresponding curvature matrix $(i\Omega^1_2)$ on this line bundle. In this case $(-1/2\pi i)\text{tr} (i\Omega^1_2)$ represents the first Chern class $c_1(\mathscr{D})$ which is the characteristic cohomology class in $H^2(\Sigma,\mathbb{R})$ \cite{Morita2001,Milnor1974}. Explicit computation shows that  $\Omega^1_2 = -4(f_x+ff_p)_p \varpi^1\wedge \varpi^2$ and hence the first Chern class is trivial iff the curvature vanishes. Note that this curvature determines the sectional curvature associated with the plane in $T_q\Sigma$ spanned by $\xi_1$ and $\xi_3$  at a point $q$ \cite{Bayrakdar2018}.  Recently, it has been shown in \cite{IsimEfe2017} that an autonomous dynamical system defined by a nonvanishing vector field on an orientable three-dimensional manifold is globally bi-Hamiltonian
if and only if the first Chern class of the normal bundle of the given vector field vanishes. It follows that the dynamical system determined by the Hamiltonian vector field $v= \xi_1(H)\xi_2$ on the Sasakian manifold corresponding to an ODE of the form $d^2y/dx^2 = f(x,p)$ is globally bi-Hamiltonian if and only if $f$ satisfies $f_x+ff_p = \Psi (x)$ for some function $\Psi$. 

\section*{Acknowledgment} The author express his thanks to the anonymous referee for his/her comments
and suggestions that helped to improve the paper.


\begin{thebibliography}{99}
	
	\bibitem{Sasaki1962} S. Sasaki, Y. Hatakeyama, On differentiable manifolds with contact metric structures. J. Math. Soc. Japan \textbf{14}, 249--271 (1962)
	
	\bibitem{Sasaki1960} S. Sasaki, On differentiable manifolds with certain structures which are closely related to almost contact structure I. T\^{o}hoku Math. J. \textbf{12}, 459--476 (1960)
	
	\bibitem{Sasaki1961} S. Sasaki, Y. Hatakeyama, On differentiable manifolds with certain structures which are closely related to almost contact structure II. T\^{o}hoku Math. J. \textbf{2} 13(2): 281--294 (1961).
	
	
	
	
	
	
	
	\bibitem{Hatakeyama1963}  Y. Hatakeyama, Y. Ogawa, S. Tanno,  Some properties of manifolds with contact metric structure. T\^{o}hoku Math. J.  (2)\textbf{15}, 42--48 (1963)
	
	\bibitem{Olszak1979} Z. Olszak,  On contact metric manifolds. T\^{o}hoku Math. J. \textbf{31}, 247--253 (1979)
	
	\bibitem{Chern1984}  S.S. Chern, R.S. Hamilton,  On riemannian metrics adapted to three-dimensional contact manifolds. In:  F Hirzebruch et al (eds.) Lecture Notes in Mathematics vol 1111, p. 279,  Arbeitstagung Bonn Springer, Berlin, Heidelberg,  1985
	
	\bibitem{Blair1977} D.E. Blair, Two remarks on contact metric structures. T\^{o}hoku Math. J. \textbf{2}, 319--324 (1977)
	
	\bibitem{Blair1990}  D.E. Blair, T. Koufogiorgos, R. Sharma,  A classification of 3-dimensional contact metric manifolds with $Q\phi=\phi Q$. Kodai Math. J.  \textbf{13}, 391--401 (1990)
	
	
	\bibitem{Blair1995} D.E. Blair, T.  Koufogiorgos, B.J. Papantoniou, Contact metric manifolds satisfying a nullity condition. Isr. J. Math. \textbf{91}, 189--214	(1995) 
	
	\bibitem{Perrone1990} D. Perrone,  Torsion and critical metrics on contact three-manifolds. Kodai Math. J. \textbf{13}, 88--100 (1990)	
	
	\bibitem{Perrone1995}  D. Perrone, Contact metric manifolds whose characteristic vector field is a harmonic vector field. Differ Geom Appl. \textbf{20}, 367--378 (2004)
	
	\bibitem{Koufogiorgos1993} T. Koufogiorgos, Contact metric manifolds. Ann. Glob. Anal. Geom. \textbf{11}, 25--34 (1993)
	
	\bibitem{Geiges1997} H. Geiges,  Normal contact structures on 3--manifolds. Tohoku Math. J. \textbf{49}, 415--422 (1997)
	
	\bibitem{Sharma1995} R. Sharma,  On the curvature of contact metric manifolds. J. Geom. \textbf{53}, 179--190 (1995)
	
	
	\bibitem{Mrugala1991} R. Mrugala, J.D. Nulton, S.J. Christian, P. Salamon, Contact structure in thermodynamic theory. Reports Math. Phys. \textbf{29}, 109--121 (1991)
	
	\bibitem{Mrugala2000}  R. Mrugala, On contact and metric structures on thermodynamic spaces. RIMS Kokyuroku \textbf{1142}, 167--181 (2000)
	
	\bibitem{Hernandez1998}  G. Hern\'a ndez, E.A. Lacomba,  Contact Riemannian geometry and thermodynamics. Differ Geom Appl. \textbf{8}(3), 205--216 (1998)
	
	\bibitem{Bravetti2015} A. Bravetti,  C.S. Lopez-Monsalvo, F. Nettel,  Contact symmetries and Hamiltonian thermodynamics. Ann. Phys. (N. Y.)
	\textbf{361}, 377--400 (2015)
	
	\bibitem{Hudon2015} N. Hudon, M. Guay, D. Dochain, Metric Thermodynamic Phase Space and Stability Problems. IFAC-PapersOnLine
	\textbf{49}, 52--57 (2016)
	
	\bibitem{Eberard2007}D.  Eberard, B.M. Maschke, A.J. van der Schaft, 
	An extension of Hamiltonian systems to the thermodynamic phase space: Towards a geometry of nonreversible processes. Reports Math. Phys. \textbf{60}, 175--198 (2007)
	
	\bibitem{Bravetti2017} A. Bravetti, H. Cruz, D. Tapias,  Contact Hamiltonian mechanics. Ann. Phys. (N. Y.) \textbf{376}, 17-39 (2017)
	
	
	\bibitem{Cruz2018} H. Cruz, Contact Hamiltonian mechanics. An extension of
	symplectic Hamiltonian mechanics. IOP Conf. Series: Journal of Physics: Conf. Series 1071 012010 (2018)
	
	\bibitem{Gumral1993} H. G\"umral, Y.  Nutku, Poisson structure of dynamical systems with three degrees of freedom. J. Math. Phys. \textbf{34}, 5691--5723 (1993)
	
	\bibitem{Abadoglu2009} E. Abado\u{g}lu, H. G\"{u}%
	mral,  Bi-Hamiltonian structure in Frenet-Serret Frame. 
	Physica D \textbf{238}, 526--530 (2009)
	
	\bibitem{Blair1976} D.E. Blair, Contact manifolds in Riemannian geometry, Lecture Notes in Mathematics, 509 Springer-Verlag, Berlin (1976)
	
	\bibitem{Blair2002} D.E Blair, Riemannian Geometry of Contact and Symplectic Manifolds. Progress in Mathematics,
	vol. 203. Birkh\"auser Boston Inc, Boston (2002) 
	
	\bibitem{Vassiliou2000} P.J. Vassiliou,  Introduction: Geometric approaches to differential equations. In Vassiliou  P J and I G Lisle (eds.) Geometric approaches to differential equations, Australian Mathematical Society Lecture Series. 15, Cambridge UK, Cambridge University Press p 1, 2000
	
	\bibitem{Saunders1989}
	D.J. Saunders, The Geometry of Jet Bundles, Cambridge University Press,
	(1989), Cambridge. 
	
	\bibitem{Bryant1995} Bryant R, Griffiths P, Hsu L.
	Toward a geometry of differential equations. In: Shing-Tung Yau (editor). Geometry, Topology and Physics. MA, USA: Int. Press, 1995, pp. 1--76.
	
	\bibitem{Bayrakdar2018} T. Bayrakdar, A.A. Ergin, Minimal Surfaces in Three-Dimensional Riemannian Manifold Associated with a Second-Order ODE. Mediterr. J. Math. \textbf{15}, 183 (2018)  
	
	\bibitem{OkBayrakdar2019} Z. Ok Bayrakdar, T. Bayrakdar,  A geometric description for simple and damped harmonic oscillators Turk J. Math. (2019) 43: 2540 - 2548
	
	\bibitem{Morita2001} S. Morita, 
	Geometry of differential forms. Translations
	of Mathematical Monographs vol 201, Providence, RI: American Mathematical Society (2001)
	
	\bibitem{Olver1990} P.J. Olver,   Canonical forms and integrability of bi-Hamiltonian systems. Phys Lett A \textbf{148}, 177--187 (1990)
	
	\bibitem{Olver1993}
	P.J. Olver, Applications of Lie Groups to Differential Equations, 2nd ed. New York:
	Springer-Verlag, 1993.
	
	\bibitem{Olver1995} P.J. Olver,
	Equivalence, invariants, and symmetry. Cambridge: Cambridge University Press, 1995.
	
	
	
	
	
	\bibitem{Milnor1974} J. Milnor, J.D. Stasheff,  Characteristic Classes. Princeton University Press, Princeton,  New Jersey (1974)
	
	\bibitem{Milnor1976} J. Milnor, Curvatures of left invariant metrics on lie groups, Advances in Mathematics, \textbf{21}(3), (1976) 293--329.
	
	\bibitem{IsimEfe2017} M. I\c{s}im Efe, E. Abado\u{g}lu,  Global Existence of Bi-Hamiltonian Structures
	on Orientable Three-Dimensional Manifolds. SIGMA \textbf{13}, 055 (2017)
	
	
	
	
	
	
	
	
\end{thebibliography}
\end{document}